\def\paper{contribution} 
\let\OldEndProof=\endproof      
\def\endproof{\mbox{}\hfill \$\OldEndProof}
\DeclareMathOperator{\MP}{MP}
\DeclareMathOperator{\id}{id}
\def\field#1{\mathbb #1}%
\def\R{\field{R}}%
\def\K{\mathcal K}%
\def\Kinf{\K_\infty}%
\providecommand{\Rn}{}                                
\renewcommand{\Rn}[1][n]{\R^{#1}}%
\newcommand{\Rnp}[1][n]{\R^{#1}_+}%
\newcommand{\Rp}{\R_+}%
\newcommand{\Rnnp}[1][n]{\R_+^{#1\times #1}}%
\begin{document}
\title{Nonlinear left and right eigenvectors for max-preserving maps}%
\author{Bj\"orn S.\ R\"uffer}
\institute{Bj\"orn S.\ R\"uffer \at School of Mathematical and Physical
  Sciences, The University of Newcastle (UON), Callaghan, NSW 2308,
  Australia, \email{bjorn.ruffer@newcastle.edu.au}}
\maketitle{}

\abstract{%
  It is shown that max-preserving maps (or join-morphisms) on the
  positive orthant in Euclidean $n$-space endowed with the
  component-wise partial order give rise to a semiring. This semiring
  admits a closure operation for maps that generate stable dynamical
  systems. For these monotone maps, the closure is used to define
  suitable notions of left and right eigenvectors that are
  characterized by inequalities. Some explicit examples are given and
  applications in the construction of Lyapunov functions are
  described.%
}%

\section{Introduction}
\label{sec:introduction}

Classical Perron-Frobenius theory asserts the existence of nonnegative
left and right eigenvectors corresponding to the dominant eigenvalue
of a nonnegative matrix \cite{%
  frobenius1908-uber-matrizen-aus-positiven-elementen.,
  frobenius1909-uber-matrizen-aus-positiven-elementen.-ii.,
  frobenius1912-uber-matrizen-aus-nicht-negativen-elementen.,
  perron1907-grundlagen-fur-eine-theorie-des-jacobischen-kettenbruchalgorithmus,
  perron1907-zur-theorie-der-matrices%
}. For (nonlinear) monotone mappings from a positive cone into itself,
various extensions to this theory have been developed, see
\cite{lemmensnussbaum2012-nonlinear-perron-frobenius-theory} and the
references therein.
While most of the nonlinear extensions consider some form of right
eigenvalue problem for monotone cone mappings, the question of left
eigenvectors has not found a lot of attention. One reason that left
eigenvectors do not have obvious counterparts in the world of
nonlinear mappings may be that they are naturally elements of the
(linear) dual of the underlying vector space in the classical spectral
theory of linear operators. Linear duals are not very natural places
to look for nonlinear eigenvectors.

In this \paper\ we consider a class of monotone mappings defined on the
positive cone in $\Rn$ equipped with the component-wise partial
order. It admits a suitable notion of left eigenvectors. This class
consists of \emph{max-preserving} mappings from $\Rnp$ into itself,
i.e., continuous, monotone maps $A\colon \Rnp\to\Rnp$ with $A0=0$ for
which $\max\{Ax,Ay\}=A\max\{x,y\}$. Instead of a numerical maximal
eigenvalue, we consider the case when a nonlinear extension of the
spectral radius is less than one, which can be characterised by the
requirement that $A^{k}x\to 0$ as $k\to\infty$ for any $x\in\Rnp$, or
alternatively by the inequality
$$
Ax\ngeq x\text{ for all }x\in\Rnp, x\ne0.
$$
Given this starting point, it is not surprising that our nonlinear
left and right ``eigenvectors'' are characterised by inequalities
rather than equations. The terms ``sub-eigenvectors'' and spectral
inequalities have been suggested as alternative terms for the objects
introduced here.  Both are (nonlinear) functions $l\colon \Rnp\to\Rp$
and $r\colon \Rp\to\Rnp$ that are continuous, zero at zero, monotone
and unbounded in every component. They satisfy
$$
l(A x) < l(x)
$$
for all $x\in\Rnp$, $x>0$ as well as
$$
A\big(r(t)\big)< r(t)
$$
for all $t>0$.

Both, $l$ and $r$ are defined via the closure $A^{*}$ of $A$ in the
semiring of max-preserving maps on $\Rnp$.

This \paper\ is organised as follows. The next section provides a little
more background on our interest in left eigenvectors.  In
Section~\ref{sec:preliminaries} we recall some necessary notation and
preliminary results. Section~\ref{sec:main-results} contains our main
results with formulas for left and right eigenvectors in
Theorems~\ref{thm:left-eigenvector} and~\ref{thm:right-eigenvector},
respectively. 
Two explicit examples are given in Section~\ref{sec:examples}.
In Section~\ref{sec:application} we explain how these eigenvectors can
be used to construct Lyapunov functions.
Section~\ref{sec:conclusion} concludes this \paper.

\section{Motivation}
\label{sec:motivation}

Our interest in left eigenvectors is rooted in the stability analysis
of interconnected systems, where the construction of Lyapunov
functions for monotone comparison systems is of special
interest~\cite{dirritorantzerruffer2015-separable-lyapunov-functions:-constructions-and-limitations}.

For a dynamical system $x(k+1)=Ax(k)$, evolving on $\Rnp$, a Lyapunov
function $V\colon\Rnp\to\Rp$ is an energy function that decreases along
trajectories. Lyapunov functions are used to prove that trajectories
converge to zero, to prove stability, or to compute regions of
attraction. Finding Lyapunov functions, however, is notoriously
hard. Basic properties they need to satisfy are continuity, positive
definiteness, radial unboundedness (i.e., $\|x\|\to\infty$ implies
$V(x)\to\infty$) and descent along trajectories, i.e., $V(Ax)<V(x)$
whenever $x\ne 0$.

If $A\in\Rnnp$ has spectral radius less than one, one can find a
positive vector $r$ (even in the case that $A$ is merely
nonnegative~\cite[Lemma~1.1]{ruffer2010-monotone-inequalities-dynamical-systems-and-paths-in-the-positive-orthant-of-euclidean-n-space})
so that $Ar\ll r$, i.e., the image under $A$ of $r$ is less than the
vector $r$ in every component. Such a vector determines a Lyapunov
function via $V(x)=\max_{i} x_{i}/r_{i}$, and this Lyapunov function
is called max-separable.

Max-separable Lyapunov functions exist for various monotone but
nonlinear systems as well, but not for
all~\cite{dirritorantzerruffer2015-separable-lyapunov-functions:-constructions-and-limitations}. In
some of these nonlinear cases one can instead find sum-separable
Lyapunov functions, which are of the form
$V(x)=\sum_{i}v_{i}(x_{i})$. If again $A\in\Rnnp$ has spectral radius
less than one, i.e., in the linear case, there exists a positive
vector $l\in\Rnp$, so that $l^{T}A\ll l^{T}$. This vector, too,
determines a Lyapunov function, $V(x)=l^{T}x$, and this one is
sum-separable. For general monotone systems however, these
sum-separable Lyapunov functions are not well understood yet, although
progress has been made in some special
cases~\cite{dirritorantzerruffer2015-separable-lyapunov-functions:-constructions-and-limitations,
  itojiangdashkovskiyruffer2013-robust-stability-of-networks-of-iiss-systems:-construction-of-sum-type-lyapunov-functions}.

As left Perron eigenvectors do determine (sum-) separable Lyapunov
functions in the linear case, there is hope that a suitable notion of
left eigenvectors will also provide Lyapunov functions in more general
scenarios. It turns out, however, that while the present definition of
left-eigenvectors does yield Lyapunov functions given by explicit
formulas, these Lyapunov functions are not separable in the above
sense.

\section{Preliminaries}
\label{sec:preliminaries}

In this work we consider $\Rn$ equipped with the component-wise
partial order, which generates the positive cone
$\Rnp=[0,\infty)^{n}$. We use the following notation.
\begin{align*}
  x&\leq y  \text{ if } y-x\in\Rnp,\\
  x&< y  \text{ if } x\leq y\text{ and }x\ne y,\\
  x&\ll y  \text{ if } y-x\text{ are in the interior of }\Rnp.
\end{align*}
Note that $\max\{x,y\}$ is the component-wise maximum of the two
vectors $x,y\in\Rn$. For notational convenience we use the binary
symbol $x\oplus y$ to denote the same thing. We also write
$\bigoplus\{x_{k}\}$ to denote the component-wise supremum of a
possibly infinite set $\{x_{k}\}$ of vectors $x_{k}\in\Rn$. 

By $\|x\|=\max_{i} |x_{i}|$ we denote the maximum-norm of
$x\in\Rn$. We note that for $x,y\in\Rn$ we have
$\|x\oplus y\|\leq \max \{\|x\|,\|y\|\}$ and equality holds if
$x,y\in\Rnp$.

The vector $(1,\ldots,1)^{T}\in\Rn$ will be denoted by $\mathbf{1}$.
The standard unit vectors in $\Rn$ are denoted by
$e_{1},\ldots,e_{n}$.

In this work we will restrict our attention to continuous and monotone
mappings. A mapping $A$ is monotone if it preserves the partial order,
i.e., $Ax\leq Ay$ whenever $x\leq y$.  The set of
\emph{max-preserving} mappings from $\Rnp$ into itself is given by
\begin{multline*}
  \MP=\MP(\Rnp)= \Big\{ A\colon\Rnp\to\Rnp %
  \text{ such that }\\
  A(x\oplus y)=(Ax)\oplus (Ay) \text{ for all }x,y\in\Rnp \Big\}.
\end{multline*}
The term max-preserving map has been coined in
\cite{karafyllisjiang2011-a-vector-small-gain-theorem-for-general-non-linear-control-systems}
in the context of stability analysis of interconnected control
systems. It coincides with the notion of join-morphisms in lattice
theory~\cite{birkoff1973-lattice-theory}. It is immediate that
max-preserving mappings are also monotone.

For $A\in\MP$ we define non-decreasing functions
$a_{ij}\colon\Rp\to\Rp$, $i,j=1,\ldots,n$, by
$a_{ij}(t)=\big(A(te_{j})\big)_{i}$ for $t\in\Rp$. It is immediate that $A$
can be represented as
$$
Ax =
\begin{pmatrix}
  a_{11}(x_{1})\oplus \ldots \oplus a_{1n}(x_{n})\\
  \vdots\\
  a_{n1}(x_{1})\oplus \ldots \oplus a_{nn}(x_{n})
\end{pmatrix},
$$
so it is natural to think of $A$ as the matrix $(a_{ij})$.

We state the following observation, where $\circ$ refers to
composition. 
\begin{lemma}
  The set $\MP$ is a $(\circ,\oplus)$-semiring with identity element
  $\id_{\Rnp}$ and neutral element $0_{\Rnp}$.
\end{lemma}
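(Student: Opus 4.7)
The plan is to verify the semiring axioms in turn, each reducing to a pointwise identity on $\Rnp$. Addition on maps is pointwise, $(A \oplus B)(x) = Ax \oplus Bx$, and multiplication is composition. The elements $\id_{\Rnp}$ and $0_{\Rnp}$ clearly satisfy the max-preserving defining identity. Closure of $\MP$ under $\oplus$ uses only associativity and commutativity of $\oplus$ on $\Rnp$ to regroup $(Ax \oplus Ay) \oplus (Bx \oplus By)$ after applying the max-preserving property of $A$ and $B$ to $x \oplus y$. Closure under $\circ$ follows by applying the max-preserving property of the outer map to the max-preserving identity satisfied by the inner map.

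For the monoid structure, commutativity, associativity and the neutral element $0_{\Rnp}$ for $(\MP,\oplus)$ are inherited from the corresponding pointwise properties on $\Rnp$, and composition is associative with two-sided identity $\id_{\Rnp}$ for any collection of set maps. Right distributivity $(A \oplus B) \circ C = (A \circ C) \oplus (B \circ C)$ falls out of the pointwise definition of $\oplus$, with no appeal to the defining property of $\MP$. Absorption of $0_{\Rnp}$ under $\circ$ reduces to the identity $A0=0$, which is part of the standing definition of max-preserving maps given in the introduction.

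The one identity that actually uses the max-preserving hypothesis in an essential way is the left distributivity $A \circ (B \oplus C) = (A \circ B) \oplus (A \circ C)$: evaluating at $x$ yields $A(Bx \oplus Cx)$, and the max-preserving property of $A$ rewrites this as $ABx \oplus ACx$. I expect no real obstacle in the argument; the content of the lemma is essentially the observation that the various axioms line up, with the definition of $\MP$ doing precisely the work needed at left distributivity and the convention $A0=0$ doing the work needed at absorption.
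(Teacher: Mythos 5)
Your verification is correct and proceeds by the same direct route as the paper, but it is considerably more complete. The paper's proof only checks that $\MP$ is closed under $\circ$ and under $\oplus$, and that $\id_{\Rnp}$ and $0_{\Rnp}$ belong to $\MP$ and act as identity and neutral element; it does not write out associativity, distributivity, or absorption. Your observation that left distributivity $A\circ(B\oplus C)=(A\circ B)\oplus(A\circ C)$ is the one axiom where the max-preserving hypothesis does essential work (right distributivity being purely a consequence of the pointwise definition of $\oplus$) is accurate and is the genuine mathematical content that the paper leaves implicit. One caveat concerns absorption: you justify $A\circ 0_{\Rnp}=0_{\Rnp}$ by appealing to $A0=0$ as part of the ``standing definition,'' but the formal definition of $\MP$ in the Preliminaries requires only $A(x\oplus y)=Ax\oplus Ay$; the condition $A0=0$ is imposed only \emph{after} the lemma, as a further restriction. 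Without it a nonzero constant map is max-preserving yet fails absorption, so strictly speaking either the lemma must be read for the restricted subclass or the absorption axiom must be excluded from the semiring requirements. This is a blemish in the paper's statement as much as in your argument --- the paper's own proof sidesteps it by never checking absorption --- but your explicit reliance on $A0=0$ makes the dependence visible, which is to your credit provided you flag where that hypothesis actually enters the paper.
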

\begin{proof}
  If $A,B\in \MP$ then we verify
  $$
  (A\circ B)(x\oplus y) =%
  A (Bx\oplus By) =%
  (A\circ B)x\oplus (A\circ B)y,%
  $$
  so $\MP$ is closed under composition and
  \begin{multline*}
    (A\oplus B)(x\oplus y)=%
    A(x\oplus y) \oplus B (x\oplus y)=\\%
    (Ax\oplus Ay) \oplus (Bx \oplus By)=%
    (Ax\oplus Bx) \oplus (Ay \oplus By)=\\%
    (A\oplus B)x \oplus (A\oplus B)y,%
  \end{multline*}
  so $\MP$ is closed under the maximum operation as well.

  Clearly the identity $\id_{\Rnp}$ is a member of $\MP$ and it is the
  identity element for composition. The function $0=0_{\Rnp}$, which
  sends all of $\Rnp$ to $0\in \Rnp$, is in $\MP$, and it serves as
  neutral element for the maximum operation.
\end{proof}

For convenience we will write compositions simply as products, i.e.,
$$
A^{k}=A\circ A\circ \ldots \circ A.
$$
We make the convention that $A^{0}=\id$.

We now further restrict our attention to continuous mappings
$A\in\MP(\Rnp)$ that satisfy $A0=0$. We have the following
characterisation.

\begin{theorem}[\cite{ruffer2010-monotone-inequalities-dynamical-systems-and-paths-in-the-positive-orthant-of-euclidean-n-space}]
  \label{thm:equivalences}
  Let $A\in\MP(\Rnp)$ be continuous and satisfy $A0=0$.
  Then the following are equivalent.
  \begin{enumerate}
  \item\label{item:1} For every $x\in\Rnp$, 
    \begin{equation}
      A^{k}x\longrightarrow 0\text{ as } k\to\infty.\label{eq:1}
    \end{equation}
  \item\label{item:2} For every $x\in\Rnp$, $x\ne0$, 
    \begin{equation*}
      Ax\ngeq x. 
    \end{equation*}
  \item\label{item:3} Every cycle in the matrix $A$ is a contraction,
    i.e.,
    \begin{equation*}
      \big(
      a_{i_{1}i_{2}}\circ 
      a_{i_{2}i_{3}}\circ 
      \ldots \circ 
      a_{i_{k}i_{1}}
      \big) (t) < t
    \end{equation*}
    for every $t>0$ and all finite sequences
    $(i_{1},\ldots,i_{k})\in\{1,\ldots,n\}^{k}$.
  \item\label{item:4} All minimal cycles in $A$ are contractions,
    i.e., those that do not contain shorter cycles.
  \item\label{item:5} For every $b\in\Rnp$ there is a unique maximal
    solution $x\in\Rnp$ to the inequality
    $$
    x\leq Ax\oplus b.
    $$
  \end{enumerate}
\end{theorem}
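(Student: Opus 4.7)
The plan is to organise the five conditions around the path expansion
$$
(A^{k}x)_{i} = \bigoplus_{j_{0},\ldots,j_{k-1}}
\bigl(a_{i j_{k-1}} \circ a_{j_{k-1}j_{k-2}}\circ\cdots\circ a_{j_{1}j_{0}}\bigr)(x_{j_{0}}),
$$
which follows by induction on $k$ from the fact that $A$ is max-preserving. This identity bridges the dynamical conditions \ref{item:1}, \ref{item:2}, \ref{item:5} and the combinatorial conditions \ref{item:3}, \ref{item:4}, so the strategy is to move freely between trajectories of $A$ and compositions along walks in the directed graph on $\{1,\ldots,n\}$ with edge weights $a_{ij}$.

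The easy implications are \ref{item:1}$\Rightarrow$\ref{item:2} (if $Ax\geq x$ with $x\ne 0$, monotonicity yields $A^{k}x\geq x\not\to 0$) and \ref{item:3}$\Rightarrow$\ref{item:4} (trivial). For \ref{item:4}$\Rightarrow$\ref{item:3} I would induct on cycle length: a non-minimal cycle splits at a repeated vertex into two strictly shorter cycle compositions, and if $\phi,\psi$ both satisfy $\phi(t)<t$ for $t>0$, then so does $\phi\circ\psi$, since $\phi(\psi(t))<\psi(t)<t$ when $\psi(t)>0$ and equals $\phi(0)=0<t$ otherwise. For \ref{item:2}$\Rightarrow$\ref{item:1} I would first establish boundedness of each orbit $\{A^{k}x\}$ using continuity of $A$ at zero together with the path expansion, then set $y=\bigoplus_{k}A^{k}x$ and use continuity and monotonicity to obtain $Ay\geq y$, which forces $y=0$ by \ref{item:2}. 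For \ref{item:1}$\Leftrightarrow$\ref{item:5} I would introduce the Kleene star $A^{*}b:=\bigoplus_{k\geq 0}A^{k}b$: under \ref{item:1} this supremum is finite, solves $x=Ax\oplus b$, and dominates every subsolution via the iterate $x\leq A^{k}x\oplus(\id\oplus A\oplus\cdots\oplus A^{k-1})b$ followed by $k\to\infty$; a failure of \ref{item:1} produces nontrivial recurrent orbits that either destroy uniqueness or prevent existence of a maximal solution.

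The core step is \ref{item:1}$\Leftrightarrow$\ref{item:3}. Given \ref{item:3}, every walk of length exceeding $n$ must repeat a vertex and so decomposes into contractive cycles plus a tail of length at most $n$; with only finitely many cycle types present in the graph, one extracts a \emph{uniform} contraction estimate on any compact set of initial values and concludes that the supremum in the path expansion of $A^{k}x$ tends to zero. Conversely, if some cycle function $\phi=a_{i_{1}i_{2}}\circ\cdots\circ a_{i_{k}i_{1}}$ fails to contract, continuity and $\phi(0)=0$ produce a positive fixed point $t^{*}$; the orbit of $t^{*}e_{i_{1}}$ under $A$ then revisits the value $t^{*}$ at component $i_{1}$ every $k$ steps, contradicting \ref{item:1}. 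This is where I expect the main obstacle: pointwise contractivity $\phi(t)<t$ of individual cycles does not immediately yield a uniform rate of contraction, and upgrading from pointwise to uniform control on compact sets --- the place where continuity of the $a_{ij}$ and finiteness of the cycle catalogue do the real work --- is the delicate part of the whole theorem.
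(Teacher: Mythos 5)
A preliminary remark: the paper does not actually prove Theorem~\ref{thm:equivalences}; it defers to the reference \cite{ruffer2010-monotone-inequalities-dynamical-systems-and-paths-in-the-positive-orthant-of-euclidean-n-space} and explicitly omits the proof, so there is no in-paper argument to compare against. Your overall architecture --- the path expansion of $A^{k}x$ over walks in the weighted graph, the reduction of long walks to cycles, and the Kleene star $A^{*}b$ for condition~\ref{item:5} --- is the standard one and is in the spirit of the cited source. The implications \ref{item:1}$\Rightarrow$\ref{item:2}, \ref{item:3}$\Leftrightarrow$\ref{item:4}, and \ref{item:3}$\Rightarrow$\ref{item:1} (including your correct identification that the pointwise-to-uniform upgrade, via majorising the finitely many cycle functions by their maximum, is the real work) are fine.

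There is, however, a step that fails as written: in \ref{item:2}$\Rightarrow$\ref{item:1} you set $y=\bigoplus_{k}A^{k}x$ and claim $Ay\geq y$. Max-preservation and continuity give the \emph{opposite} inequality, $Ay=\bigoplus_{k\geq1}A^{k}x\leq y$, and in general $Ay\not\geq y$ because the $k=0$ term (i.e.\ $x$ itself) may realise the supremum in some component; so no contradiction with \ref{item:2} arises. The object satisfying a fixed-point relation is the tail supremum $y=\lim_{m\to\infty}\bigoplus_{k\geq m}A^{k}x$, which gives $Ay=y$ --- but even that presupposes boundedness of the orbit, and boundedness does not follow from ``continuity of $A$ at zero together with the path expansion'': without control on the cycles the path expansion gives no bound at all. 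The standard repair is to prove the contrapositive \ref{item:2}$\Rightarrow$\ref{item:3} instead: if a cycle through $i_{1},\ldots,i_{k}$ has composition $\phi$ with $\phi(s)\geq s$ for some $s>0$, propagating $s$ around the cycle yields an explicit $x>0$ supported on those coordinates with $Ax\geq x$; then the loop \ref{item:1}$\Rightarrow$\ref{item:2}$\Rightarrow$\ref{item:3}$\Rightarrow$\ref{item:1} closes using your argument for \ref{item:3}$\Rightarrow$\ref{item:1}. (Relatedly, in $\neg$\ref{item:3}$\Rightarrow\neg$\ref{item:1} a positive fixed point of $\phi$ need not exist; but none is needed, since $\phi(s)\geq s$ and monotonicity already give $\phi^{m}(s)\geq s$ and hence a non-vanishing subsequence of the orbit of $se_{i_{1}}$.)

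The second gap is $\neg$\ref{item:1}$\Rightarrow\neg$\ref{item:5}, which you assert rather than argue. ``Nontrivial recurrent orbits destroy uniqueness or existence of a maximal solution'' is not automatic: for $n=1$ and $a_{11}(t)=\min(t,1)$, conditions \ref{item:1}--\ref{item:4} all fail, yet $x\leq a_{11}(x)\oplus b$ has the unique maximal solution $\max(1,b)$ for every $b$. This direction genuinely uses more than recurrence; in the cited reference it leans on the entries $a_{ij}$ being strictly increasing or zero, so that a non-contractive cycle produces an unbounded solution set for suitable $b$. As it stands, this implication is the least convincing part of the proposal and needs either an additional structural hypothesis or a more careful construction.
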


Along with an alternative construction of a right eigenvector, a
slightly weaker version of this result has been proven
in~\cite[Theorem~6.4]{ruffer2010-monotone-inequalities-dynamical-systems-and-paths-in-the-positive-orthant-of-euclidean-n-space},
where the functions $a_{ij}$ were assumed to be either strictly
increasing or zero. However, the proof is essentially the same in the
current framework and thus omitted.

\section{Main results}
\label{sec:main-results}

Our main technical ingredient for the construction of left and right
eigenvectors is the closure of max-preserving maps in the semiring
$\MP$.

\begin{lemma}
  \label{lemma:closure}
  Let $A\in\MP(\Rnp)$ be continuous and satisfy $A0=0$. Let
  any of the conditions~\ref{item:1}--\ref{item:5} of
  Theorem~\ref{thm:equivalences} hold. Then the \emph{closure of $A$},
  given by
  \begin{equation}
    A^{*}x = \bigoplus_{k=0}^{\infty} A^{k}x\label{eq:4}  
  \end{equation}
  is a continuous and max-preserving map $A^{*}\colon \Rnp\to\Rnp$ with
  $A^{*}0=0$ that satisfies
  \begin{equation}
    A^{*}=\id \oplus AA^{*}=\id \oplus A^{*}A.\label{eq:5}  
  \end{equation}
\end{lemma}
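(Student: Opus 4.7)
The plan is to verify the listed properties roughly in order, with continuity being the main technical hurdle. First I would check that the supremum in~\eqref{eq:4} defines an element of $\Rnp$: by condition~\ref{item:1}, $\{A^k x\}$ converges to zero componentwise for each fixed $x$ and is therefore componentwise bounded, so $\bigoplus_{k=0}^{\infty}A^k x$ is finite in every coordinate. The equality $A^*0=0$ is immediate from $A^k 0=0$ for all $k$. For the max-preserving property I would use that $\MP$ is closed under composition (so each $A^k$ is max-preserving) together with the fact that suprema commute with pairwise max: $A^*(x\oplus y)=\bigoplus_k(A^k x\oplus A^k y)=A^*x\oplus A^*y$.

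Next I would prove the functional equations~\eqref{eq:5}. Introduce the partial suprema $s_N(x)=\bigoplus_{k=0}^{N}A^k x$, which are nondecreasing in $N$ and converge componentwise to $A^*x$ as $N\to\infty$. Continuity of $A$ gives $A(s_N(x))\to A(A^*x)$, whereas the max-preserving property of $A$ gives $A(s_N(x))=\bigoplus_{k=0}^{N}A^{k+1}x$, which converges to $\bigoplus_{k\geq 1}A^k x$. Comparing the two limits yields $A(A^*x)=\bigoplus_{k\geq 1}A^k x$, and splitting off the $k=0$ term produces $\id\oplus AA^*=A^*$. The second identity follows by the reindexing $A^*(Ax)=\bigoplus_{k\geq 0}A^{k+1}x=\bigoplus_{k\geq 1}A^k x$; no continuity argument is required here.

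The main obstacle is continuity of $A^*$, since the defining supremum involves infinitely many terms and I need uniform control of the tail in a neighborhood of each point. My strategy is a squeeze argument. Fix $x_0\in\Rnp$ and $\epsilon>0$, and choose the overestimate $\bar x=x_0+\delta\mathbf{1}$ with $\delta>0$ so small that $x\leq\bar x$ for every $x$ in some neighborhood $U\subset\Rnp$ of $x_0$. By condition~\ref{item:1}, $A^k\bar x\to 0$, so there is an index $N$ with $A^k\bar x\leq\epsilon\mathbf{1}$ for all $k\geq N$. Monotonicity of each $A^k$ propagates this bound to every $x\leq\bar x$, yielding the uniform tail estimate $\bigoplus_{k\geq N}A^k x\leq\epsilon\mathbf{1}$ on $U$. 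Combined with the trivial lower bound $A^*x\geq s_{N-1}(x)$, this sandwiches $s_{N-1}(x)\leq A^*x\leq s_{N-1}(x)+\epsilon\mathbf{1}$ for every $x\in U$ and identically at $x_0$. Since $s_{N-1}$ is a finite componentwise maximum of compositions of continuous maps, it is continuous, and subtracting the two sandwiches gives $\|A^*x-A^*x_0\|\leq\|s_{N-1}(x)-s_{N-1}(x_0)\|+\epsilon$. Letting $x\to x_0$ and then $\epsilon\to 0$ closes the argument and finishes the proof.
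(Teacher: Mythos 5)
Your proof is correct, but it takes a genuinely different route from the paper's. The paper argues combinatorially: each entry of $A^{k}$ is a supremum over paths of length $k$ in the weighted graph associated with $(a_{ij})$; any path with at least $n$ edges contains a cycle, and since all cycles are contractions such a path is dominated by the shorter path obtained by deleting the cycle. Hence $A^{*}x=\bigoplus_{k=0}^{n-1}A^{k}x$ is a \emph{finite} maximum of continuous max-preserving maps, from which continuity, max-preservation and \eqref{eq:5} all follow at once, and which yields the explicit formula recorded in Remark~\ref{rem:Astar-is-finite-maximum} and exploited in the examples. You instead work with the infinite supremum in \eqref{eq:4} directly: max-preservation from the fact that countable suprema commute with binary maxima, the identities \eqref{eq:5} via monotone convergence of the partial maxima $s_{N}$ combined with continuity of $A$, and continuity of $A^{*}$ via a locally uniform tail estimate obtained by applying condition~\ref{item:1} to the majorant $\bar x=x_{0}+\delta\mathbf{1}$ and propagating the bound downward by monotonicity. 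Your route is more robust: it makes no use of the finite-dimensional path/cycle structure and would survive in settings where $A^{*}$ fails to be a finite maximum. It is also more careful on a point the paper glosses over, namely that $A\big(\bigoplus_{k}A^{k}x\big)=\bigoplus_{k\geq1}A^{k}x$ does not follow from binary max-preservation alone; your continuity-plus-monotone-convergence argument supplies the missing justification. What you give up is the sharper structural conclusion $A^{*}=\bigoplus_{k=0}^{n-1}A^{k}$, which the paper relies on later when computing eigenvectors explicitly.
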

\begin{proof}
  The identities~\eqref{eq:5} follow immediately from writing
  out~\eqref{eq:4}. That $A^{*}$ is well-defined is mostly a
  consequence of~\eqref{eq:1}, once we note that \eqref{eq:1} implies
  that the supremum in~\eqref{eq:4} is a maximum that is attained
  after a finite number of iterates of $A$.

  The $(i,j)$th entry of the matrix $A^{*}$ consists of the supremum
  over all possible paths from node $j$ to node $i$ in the weighted
  graph with $n$ vertices and directed edges weighted with the
  functions $a_{ij}$. Because any path longer than $n$ edges will
  contain a cycle, which in turn is a contraction, the infinite
  supremum in the definition of $A^{*}$, cf.~\eqref{eq:4}, is in fact
  a maximum over at most $n$ powers of $A$.
  
  Thus $A^{*}$ is max-preserving.  In particular, only a finite number
  of terms $\|A^{k}x\|$ can be larger than $\|x\|$ and they depend
  continuously on $\|x\|$.
\end{proof}

\begin{remark}
  \label{rem:Astar-is-finite-maximum}
  From the proof we see that in fact
  $$
  A^{*}x= \bigoplus_{k=0}^{n-1} A^{k}x,
  $$
  a finite maximum of only $n$ vectors instead of a supremum.  This
  will be demonstrated in Section~\ref{sec:examples}.
\end{remark}

\begin{lemma}
  \label{lem:closure-monotone-along-trajectories}
  Let $A\in\MP(\Rnp)$ be continuous and satisfy $A0=0$. Let
  any of the conditions~\ref{item:1}--\ref{item:5} of
  Theorem~\ref{thm:equivalences} hold. Then
  the \emph{closure of $A$} satisfies
  \begin{equation}
    \label{eq:2}
    A\big(A^{*}(x)\big) = A^{*}\big(A(x)\big) < A^{*}(x)
  \end{equation}
  for all $x>0$.
\end{lemma}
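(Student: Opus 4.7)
My plan is to establish the two parts of \eqref{eq:2} separately: first the equality $A(A^{*}x)=A^{*}(Ax)$, then the strict inequality $A^{*}(Ax)<A^{*}x$.

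For the equality I would appeal to Remark~\ref{rem:Astar-is-finite-maximum} to write $A^{*}x$ as a genuine finite maximum and then push $A$ through it, using that $A$ preserves finite joins:
$$
A\bigl(A^{*}x\bigr)
= A\Bigl(\bigoplus_{k=0}^{n-1} A^{k}x\Bigr)
= \bigoplus_{k=0}^{n-1} A^{k+1}x
= \bigoplus_{k=1}^{n} A^{k}x,
$$
and the same manipulation applied to $A^{*}(Ax)=\bigoplus_{k=0}^{n-1}A^{k}(Ax)$ yields the identical right-hand side, so the two expressions coincide.

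For the strict inequality I would set $y:=A^{*}x$ and specialise \eqref{eq:5} to obtain the fixed-point identity $y=x\oplus Ay$, which immediately forces $Ay\leq y$. Because $y\geq x>0$, the vector $y$ is itself strictly positive, so Theorem~\ref{thm:equivalences}, condition~\ref{item:2}, applies at $y$ and delivers $Ay\ngeq y$. Combined with $Ay\leq y$ this is exactly $Ay<y$ in the sense of Section~\ref{sec:preliminaries}, and since $Ay=AA^{*}x=A^{*}(Ax)$ by the first step, the proof is complete.

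The only conceptually non-routine point, and what I expect to be the main obstacle, is the strict inequality: a reader might look for a quantitative estimate on the decay of the terms $A^{k}x$. The trick is to avoid any such estimate and instead read $Ay\leq y$ off the identity $A^{*}=\id\oplus AA^{*}$, so that condition~\ref{item:2} of Theorem~\ref{thm:equivalences}, applied at the single vector $y=A^{*}x>0$, closes the argument in one stroke.
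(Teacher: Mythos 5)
Your argument is correct and follows essentially the same route as the paper: both establish $AA^{*}=A^{*}A$, read $A(A^{*}x)\leq A^{*}x$ off the identity $A^{*}=\id\oplus AA^{*}$, and then invoke condition~\ref{item:2} of Theorem~\ref{thm:equivalences} at the point $y=A^{*}x>0$ to upgrade this to a strict inequality (the paper phrases this last step as a contradiction, you phrase it directly, which is an immaterial difference). Your explicit verification of the equality via the finite-maximum representation of $A^{*}$ is slightly more detailed than the paper's one-line appeal to the definition, but it is the same idea.
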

\begin{proof}
  First we note that from the definition~\eqref{eq:4} it follows that
  $A^{*}A=AA^{*}$.
  We have $A^{*}A\leq A^{*}A\oplus \id = A^{*}$ from~\eqref{eq:5}, so
  we only need to show that equality does not hold. To this end assume
  there is an $x\in\Rnp$, $x>0$, with $A^{*}Ax=A^{*}x$. Denoting
  $z=A^{*}x$, we have
  $$
  Az=AA^{*}x=A^{*}Ax=A^{*}x=z,
  $$
  which contradicts property~\ref{item:2} of
  Theorem~\ref{thm:equivalences}, as $z>x>0$. Hence no such $x$ can
  exist, proving that indeed $AA^{*}x=A^{*}Ax<A^{*}x$ for all $x>0$.
\end{proof}

Our main result is the following. 

\begin{theorem}[left eigenvectors for max-preserving maps]
  \label{thm:left-eigenvector}
  Let $A\in\MP(\Rnp)$ be continuous and satisfy $A0=0$. Let
  any of the conditions~\ref{item:1}--\ref{item:5} of
  Theorem~\ref{thm:equivalences} hold.
  Then $l\colon\Rnp\to\Rp$ given by
  \begin{equation}
    \label{eq:6}
    x\mapsto \mathbf{1}^{T}A^{*}(x)
  \end{equation}
  is continuous, monotone, satisfies $l(0)=0$, as well as
  \begin{enumerate}
  \item\label{item:6} $l(x)\to\infty$ whenever $\|x\|\to\infty$,
  \item\label{item:7} the left eigenvector inequality
    \begin{equation*}
      l Ax\leq l x
    \end{equation*}
    for all $x\in\Rnp$, and, moreover, $l Ax < l x$
    whenever $x\ne0$.
  \end{enumerate}
\end{theorem}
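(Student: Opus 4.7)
The plan is to show that all four properties of $l$ flow almost directly from the structural results already proved about $A^{*}$, in particular Lemma~\ref{lemma:closure} and Lemma~\ref{lem:closure-monotone-along-trajectories}, together with the elementary observation that the functional $v\mapsto \mathbf{1}^{T}v$ is linear, continuous, and strictly monotone on $\Rnp$ in the sense that $v>0$ implies $\mathbf{1}^{T}v>0$.

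First I would handle the easy properties in one short paragraph. Continuity of $l$ follows from continuity of $A^{*}$ (Lemma~\ref{lemma:closure}) composed with the linear (hence continuous) map $\mathbf{1}^{T}(\cdot)$. Monotonicity follows because $A^{*}\in\MP$ is monotone and $\mathbf{1}^{T}$ is monotone on $\Rnp$. The identity $A^{*}0=0$ from Lemma~\ref{lemma:closure} gives $l(0)=0$. For radial unboundedness, observe that $A^{*}=\id\oplus AA^{*}$ yields $A^{*}x\geq x$ componentwise; therefore $l(x)=\mathbf{1}^{T}A^{*}x\geq \mathbf{1}^{T}x\geq \|x\|$, and so $\|x\|\to\infty$ forces $l(x)\to\infty$.

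Next I would establish the left eigenvector inequality. Writing $l(Ax)=\mathbf{1}^{T}A^{*}(Ax)$, Lemma~\ref{lem:closure-monotone-along-trajectories} gives $A^{*}(Ax)=A(A^{*}x)\leq A^{*}x$, whence $l(Ax)\leq l(x)$ for all $x\in\Rnp$. For the strict part, let $x\in\Rnp$ with $x\ne 0$; in this paper's notation this means $x>0$. Lemma~\ref{lem:closure-monotone-along-trajectories} then gives the strict vector inequality $A^{*}(Ax)<A^{*}x$, i.e., $A^{*}x-A^{*}(Ax)\in\Rnp$ and is nonzero. Applying the strictly positive linear functional $\mathbf{1}^{T}$ to this nonzero nonnegative vector yields $\mathbf{1}^{T}\bigl(A^{*}x-A^{*}(Ax)\bigr)>0$, which is precisely $l(Ax)<l(x)$.

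The main obstacle, such as it is, lies in the last step: one must be careful that the strict order $<$ on vectors (meaning $\leq$ and not equal) really does translate into strict scalar inequality after summing the components. This works here only because $\mathbf{1}$ has all entries positive; it would fail for a general nonnegative covector. Once that is noted, the argument is essentially bookkeeping on top of Lemmas~\ref{lemma:closure} and~\ref{lem:closure-monotone-along-trajectories}.
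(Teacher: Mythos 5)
Your proposal is correct and follows essentially the same route as the paper: the basic properties and radial unboundedness come from Lemma~\ref{lemma:closure} together with $A^{*}\geq\id$, and Assertion~\ref{item:7} comes from Lemma~\ref{lem:closure-monotone-along-trajectories}. The only difference is that you make explicit the (correct) observation that the strict vector inequality $A^{*}(Ax)<A^{*}(x)$ passes to a strict scalar inequality because $\mathbf{1}$ is strictly positive, a point the paper leaves implicit.
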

\begin{proof}
  That the map $l$ is well defined, continuous, monotone, and satisfies
  $l(0)=0$ is an immediate consequence of Lemma~\ref{lemma:closure}.
  Assertion~\ref{item:6} follows from the fact that $A^{*}\geq \id$.
  Assertion~\ref{item:7} is a direct consequence of Lemma~\ref{lem:closure-monotone-along-trajectories}.
\end{proof}

\begin{remark}
  Instead of a summation of the components of $A^{*}(x)$
  in~\eqref{eq:6} we could have taken their maximum instead, at the
  expense of loosing the strict inequality in in Assertion~2 of the
  theorem. In the context of Section~\ref{sec:application}, this would
  in general give rise to a weak Lyapunov function, i.e., one that is
  merely non-increasing along trajectories.
\end{remark}

Our notion of left eigenvectors is complemented by right eigenvectors
that are given by a similar construction, which, to the best of our
knowledge, was first demonstrated
in~\cite{karafyllisjiang2011-a-vector-small-gain-theorem-for-general-non-linear-control-systems}. A
different construction is given
in~\cite{ruffer2010-monotone-inequalities-dynamical-systems-and-paths-in-the-positive-orthant-of-euclidean-n-space}.

\begin{theorem}[right eigenvectors for max-preserving
  maps~\cite{karafyllisjiang2011-a-vector-small-gain-theorem-for-general-non-linear-control-systems}]
  \label{thm:right-eigenvector}
  Let $A\in\MP(\Rnp)$ be continuous and satisfy $A0=0$. Let
  any of the conditions~\ref{item:1}--\ref{item:5} of
  Theorem~\ref{thm:equivalences} hold.
  Then $r\colon\Rnp\to\Rp$ given by
  \begin{equation}
    \label{eq:11}
    t\mapsto A^{*}(t\mathbf{1})
  \end{equation}
  is continuous, monotone, satisfies $r(0)=0$ as well as 
  \begin{enumerate}
  \item\label{item:9} $r_{i}(t)\to\infty$ when $\|t\|\to\infty$ for every $i=1,\ldots,n$,
  \item\label{item:10} the right eigenvector inequality
    \begin{equation}
      \label{eq:12}
      A\big(r(t)\big)\leq r(t)
    \end{equation}
    for all $t\geq0$, and, moreover, $A\big(r(t)\big)< r(t)$ when
    $t>0$.
  \end{enumerate}
\end{theorem}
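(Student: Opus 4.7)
The plan is to reduce everything to Lemma~\ref{lemma:closure} and Lemma~\ref{lem:closure-monotone-along-trajectories}, mirroring the proof of Theorem~\ref{thm:left-eigenvector}. The candidate $r$ is simply $A^{*}$ evaluated along the diagonal ray $t\mapsto t\mathbf{1}$, so its structural properties should transfer from those of $A^{*}$ by composition with this continuous, monotone embedding of $\Rp$ into $\Rnp$.

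First I would observe that well-definedness, continuity, monotonicity and $r(0)=0$ all follow immediately: Lemma~\ref{lemma:closure} provides these properties for $A^{*}\colon\Rnp\to\Rnp$, and $t\mapsto t\mathbf{1}$ is a continuous, monotone map sending $0$ to $0$, so $r$ inherits them by composition.

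For Assertion~\ref{item:9}, I would invoke the identity $A^{*}=\id\oplus AA^{*}$ from~\eqref{eq:5}, which yields $A^{*}(x)\geq x$ componentwise for every $x\in\Rnp$. Applied to $x=t\mathbf{1}$ this gives $r(t)\geq t\mathbf{1}$, so each component satisfies $r_{i}(t)\geq t\to\infty$ as $t\to\infty$.

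For Assertion~\ref{item:10}, I would apply Lemma~\ref{lem:closure-monotone-along-trajectories} directly with $x=t\mathbf{1}$. For $t>0$ we have $t\mathbf{1}>0$, so the lemma gives
$$
A\bigl(A^{*}(t\mathbf{1})\bigr)=A^{*}\bigl(A(t\mathbf{1})\bigr)<A^{*}(t\mathbf{1}),
$$
which is exactly $A(r(t))<r(t)$. The non-strict case $t=0$ is trivial since $A0=0$ and $A^{*}0=0$ force both sides of~\eqref{eq:12} to vanish. There is no substantial obstacle here: the two lemmas carry all the weight, and the theorem reduces to the observation that evaluating $A^{*}$ along the diagonal ray produces a path with the claimed unboundedness and strict descent under $A$. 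The only mild point one must be careful about is not overreaching to claim strict inequality at $t=0$, in line with what the statement already stipulates.
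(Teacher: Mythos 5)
Your proposal matches the paper's own proof essentially step for step: structural properties of $r$ from Lemma~\ref{lemma:closure} via composition with $t\mapsto t\mathbf{1}$, Assertion~\ref{item:9} from $A^{*}\geq\id$ in~\eqref{eq:5}, and Assertion~\ref{item:10} from Lemma~\ref{lem:closure-monotone-along-trajectories} applied at $x=t\mathbf{1}$. It is correct and requires no changes.
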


\begin{proof}
  That $r$ is well defined, continuous, monotone and satisfies
  $r(0)=0$ follows again from Lemma~\ref{lemma:closure}.
  Assertion~\ref{item:9} is a consequence of the fact that
  $A^{*}\geq \id$, see~\eqref{eq:5}, so $r(t)\geq t\mathbf{1}$.
  Assertion~\ref{item:10} follows from
  Lemma~\ref{lem:closure-monotone-along-trajectories} applied to
  $x=t\mathbf{1}$.
\end{proof}

\begin{remark}
  In both, Theorem~\ref{thm:left-eigenvector} and
  Theorem~\ref{thm:right-eigenvector}, instead of the vector
  $\mathbf{1}$ in the definition of $l$, respectively, $r$, any strictly positive
  vector could have been taken instead.
\end{remark}

\section{Examples}
\label{sec:examples}

We demonstrate with two examples that the the left and right
eigenvectors obtained in the previous section are given by finite
expressions, cf.\ Remark~\ref{rem:Astar-is-finite-maximum}, not as
limits as the definition in~\eqref{eq:4} might suggest. The examples
are borrowed
from~\cite{rufferito2015-sum-separable-lyapunov-functions-for-networks-of-iss-systems}. To
this end we define
\begin{multline*}
  \Kinf=\big\{a\colon\Rp\to\Rp\,\big\vert\, a\text{ is continuous,}
  \text{ unbounded, }\\
  \text{ strictly increasing and satisfies } a(0)=0 \big\},
\end{multline*}
which is the set of homeomorphisms from $\Rp$ into itself.

First we consider the case $n=2$. In this case $A$ takes the form
$$
A=
\begin{pmatrix}
   a_{11}&  a_{12}\\
   a_{21}&  a_{22}
\end{pmatrix}
$$ 
with $a_{ij}\in(\Kinf\cup\{0\})$. The associated max-preserving mapping $A\colon\Rp^{2}\to\Rp^{2}$ is given by
$$
\begin{pmatrix}
  x_{1}\\
  x_{2}  
\end{pmatrix}\mapsto
\begin{pmatrix}
  a_{11}(x_{1}) \oplus  a_{12}(x_{2})\\
  a_{21}(x_{1}) \oplus  a_{22}(x_{2})
\end{pmatrix}.
$$
The conditions of Theorem~\ref{thm:equivalences} are satisfied if and only if
\begin{align}
   a_{11}&<\id \nonumber\\
   a_{22}&<\id  \nonumber\\
  \intertext{and}
   a_{12}\circ a_{21}&<\id.
                               \label{eq:15}\\
  \intertext{Note that \eqref{eq:15} holds if and only if}
   a_{21}\circ a_{12}&<\id \nonumber
\end{align}
holds. This can be seen by observing that every $\Kinf$ function has
an inverse which is again a $\Kinf$ function. 

Writing $x=(x_{1},x_{2})^{T}$ and under the above assumptions we
compute
\begin{align}
  A^{*}(x) & = \bigoplus_{k=0}^{\infty} A^{k}(x) \nonumber\\
           & = x\oplus Ax = \big(\id_{\Rp^{2}}\oplus A\big) (x)\nonumber\\
           & = %
             \begin{pmatrix}
               \id &  a_{12}\\
                a_{21} & \id
             \end{pmatrix} (x)
                             =
             \begin{pmatrix}
               a_{11}^{*} & a_{12}^{*}\\
               a_{21}^{*} & a_{22}^{*}
             \end{pmatrix} (x)
                            \label{eq:16}
\end{align}
as already
\begin{align*}
  A^{2} & =
          \begin{pmatrix}
             a_{11}^{2}\oplus  a_{12}\circ a_{21} &  a_{12}\circ a_{22}\oplus  a_{11}\circ a_{12}\\
             a_{21}\circ a_{11}\oplus  a_{22}\circ a_{21} &   a_{22}^{2}\oplus  a_{21}\circ a_{12}
          \end{pmatrix}
\end{align*}
is component-wise less than the matrix $(\id_{\Rp^{2}}\oplus\, A)$
computed above.

From~\eqref{eq:16} we obtain
$$
l(x) = x_{1}\oplus a_{12}(x_{2}) + x_{2}\oplus a_{21}(x_{1})
$$
Notably, this function is in general not smooth and neither sum- nor
max-separable.

For the case $n=3$ things are essentially the same.

Starting from 
$$
A=
\begin{pmatrix}
   a_{11} &  a_{12} &  a_{13} \\
   a_{21} &  a_{22} &  a_{23} \\
   a_{31} &  a_{32} &  a_{33}
\end{pmatrix}
$$
and under the assumption that all cycles in $A$ are contractions, we
can compute $A^{*}$ simply
\begin{align}
  \nonumber
  A^{*} & =  \id_{\Rp^{3}}\oplus A \oplus A^{2}\\
  \label{eq:14}
        & =
          \arraycolsep 1ex      
          \begin{pmatrix}
            \id & a_{12} \oplus a_{13} \circ a_{32} & a_{13} \oplus a_{12} \circ a_{23} \\
            a_{21} \oplus a_{23} \circ a_{31} & \id & a_{23} \oplus a_{21} \circ a_{13} \\
            a_{31} \oplus a_{32} \circ a_{21} & a_{32} \oplus a_{31} \circ a_{12} & \id
          \end{pmatrix},
\end{align}
where we note that the simplifications used to obtain~\eqref{eq:14} are possible
because all cycles are contractions.

From~\eqref{eq:14} we obtain
\begin{align*}
  l(x)= %
  x_1 \oplus (a_{12} \oplus a_{13} \circ a_{32})(x_2) \oplus
  (a_{13} \oplus a_{12} \circ a_{23})(x_3)\\
  + (a_{21} \oplus a_{23} \circ a_{31})(x_{1}) \oplus x_{2} \oplus
  (a_{23}
  \oplus a_{21} \circ a_{13})(x_{3})\\
  + (a_{31} \oplus a_{32} \circ a_{21})(x_1) \oplus (a_{32} \oplus
  a_{31} \circ a_{12})(x_2) \oplus x_3.
\end{align*}

\section{Application}
\label{sec:application}

Let $A\in\MP(\Rnp)$ be continuous and satisfy $A0=0$. If
$A^{k}x\to0$ for $k\to\infty$, two types of Lyapunov functions can be
defined based on the eigenvectors introduced in the previous
section. Let $l\colon\Rnp\to\Rp$ and $r\colon\Rp\to\Rnp$ denote the
left and right eigenvectors of $A$, respectively.

Under some additional regularity assumptions, or rather,
regularisation of $r$, a max-separable Lyapunov function
$V\colon\Rnp\to\Rp$ is given by
$$
V(x) = \max_{i} r^{-1}_{i}(x_{i}),
$$
where $r_{i}$ denotes the $i$th component function of $r$. We refer
the interested reader to
\cite{karafyllisjiang2011-a-vector-small-gain-theorem-for-general-non-linear-control-systems}
or
to~\cite{dirritorantzerruffer2015-separable-lyapunov-functions:-constructions-and-limitations}
and the references therein for further details.

The left eigenvector $l$ also yields a Lyapunov function
$V\colon\Rnp\to\Rp$ simply by
$$
V(x) = l(x).
$$
Theorem~\ref{thm:left-eigenvector} establishes that this is indeed a
Lyapunov function for the system $x(k+1)=A(x(k)$.

We note that this Lyapunov function is in general neither sum- nor
max-separable. However, it has the advantage that no additional
regularity has to be assumed to make the components of the eigenvector
invertible and that it can be computed directly from the problem data.

\begin{example}
  Consider the matrix
  $$
  A=
  \newcommand{\Bold}[1]{\mathbf{#1}}
  \left(
    \begin{array}{rrr}
      \frac{1}{2} & \frac{1}{3} & \frac{1}{7} \\
      2 & \frac{1}{2} & 0 \\
      0 & 3 & \frac{1}{2}
    \end{array}\right)
  $$
  where we take the entries as linear functions $t\mapsto a_{ij}t$ and
  compute $Ax$ in max algebra, making the associated map
  $A\colon\Rnp\to\Rnp$ max-preserving.

  There are five cycles in this matrix. Three of them are
  ``self-loops'' of weight $1/2$. The other two are from node 1 to 2
  with weight 2 and back to node 1 with weight $1/3$, as well as from
  1 to 2 with weight 2, from there to 3 with weight 3 and back to 1
  with weight $1/7$. All of the loop-weights (products) are less than
  one, so this matrix satisfies the equivalent conditions of
  Theorem~\ref{thm:equivalences}.
  
  A simple computation yields
  $$
  A^{*}=
  \newcommand{\Bold}[1]{\mathbf{#1}}
  \left(
    \begin{array}{rrr}
      1 & 2 & 6 \\
      \frac{3}{7} & 1 & 3 \\
      \frac{1}{7} & \frac{2}{7} & 1
    \end{array}\right).
  $$

  From here we obtain $l(x)=\max\left\{x_{1},2x_{2},6x_{3}\right\} + %
  \max\left\{ \frac{3}{7}x_{1}, x_{2}, 3x_{3} \right\}%
  +%
  \max\left\{ \frac{1}{7}x_{1}, \frac{2}{7}x_{2}, x_{3} \right\} $ %
  and we verify that for $x>0$ the expression %
  $
  l(Ax) = 
  \max\left\{\frac{6}{7}x_{1}, 2x_{2}, 6x_{3} \right\} +%
  \max\left\{\frac{3}{7}x_{1}, \frac{6}{7}x_{2}, 3x_{3} \right\} + %
  \max\left\{\frac{1}{7}x_{1}, \frac{2}{7}x_{2}, \frac{6}{7}x_{3} \right\}%
  $ is indeed smaller.  
\end{example}

\section{Conclusion}
\label{sec:conclusion}

For max-preserving maps $A$ on $\Rnp$ we have shown that left and
right eigenvectors can be defined in a natural sense based on the
closure of the map $A$, extending the classical Perron-Frobenius
theory appropriately to nonlinear dominant eigenvalues. In this work
the dominant eigenvalue was assumed to be less than the identity, but
via suitable scaling this could be extended to more general scenarios.

Our results have been presented on $\Rnp$, however, an extension to
join-morphisms acting on Banach lattices is a natural next step.

The construction of left-eigenvectors and corresponding Lyapunov
functions for general monotone systems that are not generated by
elements of a semiring remains a challenge.


\def\cprime{$'$}

\end{document}